\documentclass[12pt]{amsart}
\usepackage[colorlinks=true, pdfstartview=FitV, linkcolor=blue, citecolor=blue, urlcolor=blue]{hyperref}
\usepackage{amssymb,amscd}
\textwidth=1.35\textwidth
\textheight=1.22\textheight
\calclayout 

\def\quot#1#2{#1/\!\!/#2}

\def\C{\mathbb {C}}

\def\R{\mathbb {R}}

\def\NN{\mathcal N}

\def\Z{\mathbb Z}

\def\SL{\operatorname{SL}}
\def\GL{\operatorname{GL}}

\def\Sp{\operatorname{Sp}}
\def\Spin{\operatorname{Spin}}
\def\SO{\operatorname{SO}}
\def\Orth{\operatorname{O}}

\def\inv{^{-1}}
\def\lie#1{{\mathfrak #1}}
\def\lieg{\lie g}
\def\lieh{\lie h}

\def\phi{{\varphi}}

\def\AA{\mathsf{A}}

\def\GG{\mathsf{G}}

\def\O{\mathcal O}

\def\HH{\mathcal H}

\def\pr{{\operatorname{pr}}}

\def\Lie{\operatorname{Lie}}
\def\Aut{\operatorname{Aut}}

\def\codim{\operatorname{codim}}

\def\rank{\operatorname{rank}}

\def\lieA{{\lie A}}

\def\an{{\operatorname{an}}}
\numberwithin{equation}{subsection}

\newtheorem{theorem}[subsection]{Theorem}
\newtheorem{lemma}[subsection]{Lemma}
\newtheorem{proposition}[subsection]{Proposition}
\newtheorem{corollary}[subsection]{Corollary}

\theoremstyle{definition}

\theoremstyle{remark}

\newtheorem{remark}[subsection]{Remark}

\title[Vector fields and Luna strata]{\boldmath Vector fields and Luna strata} 
 \author{Gerald W. Schwarz}
\address{Department of Mathematics\\
Brandeis University\\
Waltham, MA 02454-9110}
\email{schwarz@brandeis.edu}
\subjclass[2000]{20G20, 22E46, 22E60}
\keywords{Luna strata, vector fields}

\begin{document}
\begin{abstract}
Let $V$ be a $G$-module where $G$ is a complex reductive group. Let $Z:=\quot VG$ denote the categorical quotient.    One can ask  if the Luna stratification of $Z$ is intrinsic. That is, if $\phi\colon Z\to Z$ is any automorphism, does $\phi$ send strata to strata?
In Kuttler-Reichstein \cite{KuttlerReichstein}  the answer was shown to be yes for  $V$ a  direct sum of sufficiently many copies of a $G$-module $W$. We show that the answer is yes for almost all $V$. The key is to consider the vector fields on $Z$. Our methods also show that complex analytic automorphisms preserve the stratification.
 \end{abstract}

\maketitle

\section{Introduction}
Our base field is $\C$, the field of complex numbers. (Everything will also work for any algebraically closed field of characteristic zero, except for Remark \ref{rem:holomorphic} about holomorphic automorphisms.)\ Let $X$ be an irreducible smooth affine $G$-variety, e.g., a $G$-module. Here $G$ is reductive  but not necessarily connected. We denote the algebra of regular functions on $X$ by $\O(X)$. For the following, we refer to \cite{KraftBook}  and \cite{LunaSlice}. By Hilbert, the algebra $\O(X)^G$ is finitely generated, so that we have a quotient variety $Z:=\quot XG$ with coordinate ring $\O(Z)=\O(X)^G$. Let $\pi\colon X\to Z$ denote the morphism dual to the inclusion $\O(X)^G\subset\O(X)$. Then $\pi$ sets up a bijection between the points of $Z$ and the closed orbits in $X$. Let $x\in X$ such that the orbit $Gx$ is closed. Then the isotropy group $G_x$ is reductive and we may write $T_xX=T_x(Gx)\oplus N_x$ where $N_x$ is a $G_x$-module. We call the representation   $G_x\to\GL(N_x)$ the \emph{slice representation of $G_x$\/}. The isotropy stratum $Z_{(H)}$ of $Z$ consists of the closed orbits whose isotropy groups are conjugate to the reductive subgroup $H$ of $G$. The Luna strata of $Z$ consist  of the  irreducible components of the various $Z_{(H)}$ such that the  corresponding modules $(N_x,G_x)$ are isomorphic (after conjugating $G_x$ to $H$) to a fixed $H$-module $W$. If $X$ is a $G$-module, then isotropy strata are irreducible and there is no difference between Luna strata and isotropy strata. From now on we will call the irreducible components of the isotropy (or Luna) strata of $Z$ simply the \emph{strata of $Z$}. The strata are locally closed, smooth and finite in number.

Let $(H)$ and $(K)$ be conjugacy classes of reductive subgroups of $G$. We write $(H)< (K)$ if $H$ is conjugate to a proper subgroup of $K$. We say that $(H)$ is an isotropy class of $Z$ (or $X$) if $Z_{(H)}\neq\emptyset$. Then among the isotropy classes there is a minimum element $(H)$ called the \emph{principal isotropy class\/}. We call any corresponding closed orbit a \emph{principal orbit\/}. The corresponding stratum $Z_\pr$ of $Z$ is open and dense. In terms of slice representations, $(H)$ is the unique isotropy class such that the corresponding slice representation is of the form $H\to\GL(W+\Theta)$ where $\Theta$ denotes a trivial $H$-module and $W$ is an $H$-module with $\O(W)^H=\C$.   We  denote $\pi\inv(Z_\pr)$ by $X_\pr$.

As shorthand for saying that $X$ has finite principal isotropy groups (resp.\ trivial principal isotropy groups) we say that $X$ has FPIG (resp.\ TPIG).
We say that $X$ is \emph{stable\/} if there is a nonempty open subset of closed orbits, equivalently; the slice representation of the principal isotropy group is trivial.  
We say that $X$ is \emph{$k$-principal\/} if $\codim_X (X\setminus X_\pr)\geq k$.  

Let $\lieA(X)$ denote the derivations of $\O(X)$, equivalently; the algebraic vector fields on $X$. Let $\lieA(Z)$ denote the derivations of $\O(Z)$ and let $\pi_*\colon \lieA(X)^G\to\lieA(Z)$ be the restriction morphism. That is, $\pi_*(A)(f)=A(f)$ for $A\in\lieA(X)^G$ and $f\in\O(Z)\simeq\O(X)^G$.

\begin{theorem}\label{thm:lunainvar}
If 
$\pi_*\colon \lieA(X)^G\to\lieA(Z)$ is surjective, then the stratification of $Z$ is intrinsic. 
\end{theorem}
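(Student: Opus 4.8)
The plan is to recover the strata of $Z$ purely from the pair $(\O(Z),\lieA(Z))$, since any automorphism $\phi\colon Z\to Z$ acts on this data: $\phi$ induces $\phi^*$ on $\O(Z)$ and, by conjugation $A\mapsto \phi^*\circ A\circ(\phi^*)\inv$, a compatible automorphism $\phi_*$ of $\lieA(Z)$ (a Lie-algebra automorphism, semilinear over $\phi^*$ for the $\O(Z)$-module structure). Thus it suffices to produce an intrinsic description of the strata in terms of how $\lieA(Z)$ sits inside $\operatorname{Der}(\O(Z))$ and acts on $Z$; such a description is automatically $\phi$-invariant.

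First I would show that every vector field in the image of $\pi_*$ is tangent to each stratum closure. The basic observation is local and algebraic: if $K\subseteq G$ is reductive and $A\in\lieA(X)^G$, then $A$ is in particular $K$-invariant, so for $x\in X^K$ its value $A(x)$ lies in $(T_xX)^K=T_x(X^K)$; hence $A$ is tangent to the smooth fixed-point variety $X^K$. Pushing forward, $\pi_*(A)$ is tangent to $\pi(X^K)$, and by Luna's theory the relevant irreducible components of the sets $\pi(X^K)$ are exactly the stratum closures $\overline S$. Because the (formal or analytic) flow of $\pi_*(A)$ is connected and preserves each $\pi(X^K)$, it preserves each component, so $\pi_*(A)$ is tangent to every $\overline S$. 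Invoking the hypothesis that $\pi_*$ is surjective, I conclude that \emph{every} $A\in\lieA(Z)$ is tangent to every stratum closure; equivalently $A\,I(\overline S)\subseteq I(\overline S)$ for all strata $S$.

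Next I would compute the image $T_z^{\lieA}:=\Im\bigl(\lieA(Z)\to T_zZ,\ A\mapsto A(z)\bigr)$ at a point $z=\pi(x)$ lying on a stratum $S$, using Luna's slice theorem to replace $Z$ near $z$ by $\quot{N_x}{G_x}$ with $S$ corresponding to the linear subspace $N_x^{G_x}$. The constant vector fields in the $N_x^{G_x}$-directions are $G_x$-invariant and push forward to vector fields on $\quot{N_x}{G_x}$ whose values at the image of $0$ span $N_x^{G_x}=T_zS$; combined with the tangency from the previous step this gives $T_z^{\lieA}=T_zS$, so in particular $\dim T_z^{\lieA}=\dim S$. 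Thus $\lieA(Z)$ defines a singular distribution on $Z$ which along each stratum $S$ restricts to the full tangent bundle $TS$, while no vector field ever crosses from one stratum to another. Since the strata are smooth, irreducible and finite in number, the stratum through $z$ is precisely the leaf through $z$ of this distribution: the leaf is contained in $S$ because all vector fields are tangent to the strata, and it is open in $S$ because the distribution fills $TS$, whence it is all of the connected set $S$. As ``leaf of the distribution spanned by $\lieA(Z)$'' is manifestly intrinsic to $(\O(Z),\lieA(Z))$, the automorphism $\phi$ must carry leaves to leaves, i.e.\ strata to strata.

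The main obstacle is making the leaf argument rigorous in the algebraic category, where vector fields need not integrate to algebraic flows. I expect to phrase it through formal or local-analytic integral varieties (which also yields the statement for complex analytic automorphisms in Remark \ref{rem:holomorphic}), or alternatively to bypass flows by characterizing $\overline S$ module-theoretically as the minimal $\lieA(Z)$-invariant subvarieties compatible with the rank function $z\mapsto\dim T_z^{\lieA}$, and then arguing that $\phi$, which preserves both the invariance property and this rank function, must permute the strata. A secondary point requiring care is the precise identification of the irreducible components of $\pi(X^K)$ with the stratum closures, together with the verification that, in the surjective case, there are genuinely no extra vector fields transverse to a stratum; this is exactly the feature that fails in non-surjective examples and is what forces the hypothesis.
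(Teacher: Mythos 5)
Your core strategy is the paper's own (Proposition \ref{prop:preserve} plus Corollary \ref{cor:thmmain}): show that $\lieA(Z)(z)=T_zS$ for every $z$ in a stratum $S$ (``spans and preserves''), observe that $\phi^*(A):=\phi^*\circ A\circ(\phi^*)\inv$ makes the rank function $z\mapsto\dim\lieA(Z)(z)$ intrinsic, and conclude. Where you differ is in the ``preserves'' step: the paper never looks at fixed-point sets $X^K$, images $\pi(X^K)$, or flows. Instead it proves the pointwise statement that $\Im d\pi_x = T_{\pi(x)}Z_{(H)}$ at any point $x$ with closed orbit, using the slice model $N=N^H\oplus N'$ and the fact that every element of $\O(N')^H$ has vanishing differential at the origin; since every element of $\lieA(Z)=\pi_*\lieA(X)^G$ has value $d\pi_x(A(x))$ at $z=\pi(x)$, preservation is immediate. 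This completely sidesteps the identification of the irreducible components of $\pi(X^K)$ with stratum closures, which you rightly flag as delicate --- and which, in the paper's generality (smooth affine $X$, where Luna strata are components of isotropy strata with prescribed slice representation), is genuinely fiddly.

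Two gaps remain in your write-up. First, the ``leaf of the singular distribution'' finish should not be the main route: algebraic vector fields do not integrate to algebraic flows, leaves of singular distributions are not algebraic objects, and no integrability theorem is needed. You already have everything for the paper's finish: the intrinsic rank function equals $\dim S$ on each stratum $S$, so $\phi$ maps $S$ into the union of strata of dimension $\dim S$, and irreducibility of $S$ together with finiteness of the strata forces $\phi(S)\subset S'$ for a single stratum $S'$. Your ``alternative'' characterization via the rank function is essentially this argument; it should be promoted from fallback to proof. Second, in the spanning step you treat the passage from the local model $\quot{N_x}{G_x}$ back to $Z$ as automatic, and it is not: the constant vector fields live on the slice, and to conclude that their values are realized by elements of $\lieA(Z)$ (equivalently by $\pi_*$ of global elements of $\lieA(X)^G$) one needs that invariant vector fields pull back isomorphically across the excellent morphisms of the slice theorem (Lemma \ref{lem:excellent}) and that $\lieA(U)^G\simeq\O(U)^G\otimes_{\O(X)^G}\lieA(X)^G$ for the $G$-saturated affine open $U=GY$; this is exactly the chain \eqref{eq:eq1}--\eqref{eq:eq3} in the paper. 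Without that chain, a computation on $\quot{N_x}{G_x}$ says nothing about global vector fields on $Z$, which is what the theorem's hypothesis concerns.
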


The following theorem sharpens  results of \cite{KuttlerReichstein} and \cite{Kuttler}.

\begin{theorem}\label{thm:adjointrep}
Let $G$ be a simple algebraic group. Let $V$ be $r$  copies of the adjoint module of $G$. Then the Luna stratification of $Z:=\quot VG$ is intrinsic if   $G=\AA_1$ and $r\geq 3$  or 
$G\neq\AA_1$ and $r\geq 2$.
\end{theorem}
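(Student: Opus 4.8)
The plan is to deduce the statement from Theorem \ref{thm:lunainvar}: for $V=\lieg^{\oplus r}$ ($r$ copies of $\lieg$) it suffices to prove that $\pi_*\colon\lieA(V)^G\to\lieA(Z)$ is surjective in the indicated ranges. I would first isolate surjectivity as a general criterion and then reduce the theorem to a codimension estimate on $V$. Note at the outset that for $r\ge 2$ the center $Z(G)$ acts trivially on $\lieg$, so the principal isotropy group is $Z(G)$ (finite) with trivial slice representation; hence $V$ has FPIG, is stable, and over $Z_\pr$ each fiber of $\pi$ is a single closed orbit isomorphic to $G/Z(G)$.

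For the criterion, I claim that $\pi_*$ is surjective whenever $V$ is stable and $2$-principal, i.e. $\codim_V(V\setminus V_\pr)\ge 2$. Indeed, by the previous paragraph $\pi\colon V_\pr\to Z_\pr$ is a smooth morphism whose fibers are the orbits. Fixing a nondegenerate $G$-invariant form on $V$ (whose restriction to the orbit directions is nondegenerate over $V_\pr$, the orbits there being closed) yields a $G$-invariant splitting $TV_\pr=\ker d\pi\oplus\HH$, and the horizontal lift of any $D\in\lieA(Z_\pr)$ is a $G$-invariant field $\tilde D\in\lieA(V_\pr)^G$ with $\pi_*\tilde D=D$. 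Since $V$ is smooth and $V\setminus V_\pr$ has codimension $\ge 2$, the section $\tilde D$ of the tangent bundle extends across $V\setminus V_\pr$ (Hartogs) to $\tilde D\in\lieA(V)^G$, and $\pi_*\tilde D$ agrees with $D$ on the dense set $Z_\pr$, hence everywhere.

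The theorem then reduces to showing that $\lieg^{\oplus r}$ is $2$-principal in exactly the stated ranges, which is the main point. The key observation is that $v=(X_1,\dots,X_r)$ is non-principal precisely when the closed orbit in $\overline{Gv}$ has positive-dimensional isotropy; this forces its semisimplification to be centralized by a nontrivial torus $S$. Writing $\lie l=\lie z_{\lieg}(S)$ for the Levi and $\lie p=\lie l\oplus\lie u$ for an associated parabolic, a destabilizing one-parameter subgroup may be taken in $S$, whence $v\in\Ad(g)\,\lie p^{\oplus r}$ for some $g\in G$; conversely every $v\in\lie p^{\oplus r}$ is non-principal, since $\lie z(\lie l)$ centralizes the semisimplification. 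Thus $V\setminus V_\pr=\bigcup_{\lie p}G\cdot\lie p^{\oplus r}$, a finite union over proper parabolics. For each piece $\dim(G\cdot\lie p^{\oplus r})\le\dim(G/P)+r\dim\lie p=\dim\lie u+r\dim\lie p$, so $\codim_V(G\cdot\lie p^{\oplus r})\ge(r-1)\dim\lie u$, and minimizing gives $\codim_V(V\setminus V_\pr)\ge(r-1)\,d_{\min}$, where $d_{\min}$ is the least nilradical dimension of a proper parabolic.

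It remains to evaluate $d_{\min}$. Since maximal parabolics have the smallest nilradicals, $d_{\min}=\min_i\dim\lie u_{\lie p_i}$, and $\dim\lie u_{\lie p_i}$ is the number of positive roots whose support contains the simple root $\alpha_i$. In a connected Dynkin diagram with at least two nodes each $\alpha_i$ is adjacent to some $\alpha_j$, so $\alpha_i+\alpha_j$ contributes a second such root and $d_{\min}\ge 2$; for $\AA_1$ there is a single positive root and $d_{\min}=1$. Hence $\codim_V(V\setminus V_\pr)\ge(r-1)d_{\min}$ is $\ge 2$ exactly when $G=\AA_1$ and $r\ge 3$, or $G\neq\AA_1$ and $r\ge 2$; combined with the criterion and Theorem \ref{thm:lunainvar} this proves the theorem. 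I expect the main obstacle to be the surjectivity criterion — specifically justifying the Hartogs extension of the horizontal lift and the identification of $V\setminus V_\pr$ with the parabolic loci $G\cdot\lie p^{\oplus r}$ — whereas the root-theoretic evaluation of $d_{\min}$ is routine.
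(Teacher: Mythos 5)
Your overall framework is sound and is in fact the paper's own route: reduce to surjectivity of $\pi_*$ via Theorem \ref{thm:lunainvar}, and get surjectivity from $2$-principality plus orthogonality. But note that your "criterion" is misstated: stability and $2$-principality alone are not what your horizontal-lift argument uses. The construction needs a nondegenerate $G$-invariant form on $V$ (and the nontrivial fact that it restricts nondegenerately to tangent spaces of closed orbits), i.e.\ it needs $V$ orthogonal; this is exactly the hypothesis of Theorem \ref{thm:sufficient}(1), which for $V=r\lieg$ is supplied by the Killing form. So the first half of your proof is, in substance, the cited result of \cite{SchLifting} rather than a new, more general criterion.

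The genuine gap is in the second half, in the identification $V\setminus V_\pr=\bigcup_{\lie p}G\cdot\lie p^{\oplus r}$, i.e.\ in the claim that $v$ is non-principal exactly when the closed orbit in $\overline{Gv}$ has \emph{positive-dimensional} isotropy. Closed orbits can have finite but non-central isotropy, and such points are non-principal yet lie in no $G\cdot\lie p^{\oplus r}$. Concretely, take $G=\GG_2$ and let $\lie m=\liesl_2\oplus\liesl_2=\lieg^\theta$ be the fixed subalgebra of an involution $\theta$ (Borel--de Siebenthal); choose $X_1,X_2$ generating $\lie m$. The orbit of $v=(X_1,X_2)$ is closed (the generated subalgebra is semisimple, hence reductive in $\lieg$; Richardson), and its isotropy group $Z_G(\lie m)$ is finite, since $\liet\subset\lie m$ and the roots of $\lie m$ span $\liet^*$, but it contains $\theta\neq e$; as the principal isotropy group here is trivial, $v\notin V_\pr$. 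On the other hand $\lie m$ embeds in no proper parabolic $\lie p=\lie l\oplus\lie u$ of $\GG_2$: $\lie m\cap\lie u$ is an ideal of $\lie m$ consisting of nilpotent elements, hence zero by Engel and semisimplicity, so $\lie m$ would embed in a Levi $\lie l$, impossible since $\dim\lie l\le 4<6$. Thus your union misses entire strata --- precisely those with finite non-central isotropy, which exist whenever $\lieg$ has a proper semisimple subalgebra of maximal rank (all types except $\AA_n$, which is why the type-$\AA$ picture of \cite{LeBruynProcesiEtale} looks like yours) --- and your codimension estimate does not bound $\codim_V(V\setminus V_\pr)$, so $2$-principality is not established. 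The paper sidesteps this entirely: Proposition \ref{prop:adjoint} rules out codimension-one strata for an \emph{arbitrary} isotropy group $H$ (finite or not) by classifying the possible slice representations with one-dimensional quotient (the compact group must act freely and transitively on a sphere, forcing $S^0$, $S^1$ or $S^3$), and then deduces $2$-principality from orthogonality of $V$ via \cite[Corollary 7.4]{SchLifting}. Your argument could be repaired by adding, via Borel--de Siebenthal theory, a codimension estimate for the loci $G\cdot\lie m^{\oplus r}$ with $\lie m$ a proper maximal-rank reductive subalgebra satisfying $\lie z_\lieg(\lie m)=0$, but as written the key step fails.
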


We should mention the case where the quotient $Z$ is smooth (then $V$ is called \emph{coregular\/}). In this case, the stratification is intrinsic if and only if there is only one stratum. This is precisely the case where $V$ is \emph{fix-pointed\/}, i.e., the closed $G$-orbits are the fixed points. Then $V=V^G\oplus V'$ as $G$-module where $\O(V')^G=\C$. If $V$ is coregular and not fix-pointed, then the stratification of $Z$ is never intrinsic. Now it follows that the theorem above is best possible since $V$ is coregular when  $r=1$ and when $r=2$ and $G=\AA_1$. 

Let $X$ have FPIG and let $X_{(n)}$ denote the points of $X$ with isotropy group of dimension $n$. Then the transcendence degree of $\C(X_{(n)})^G$ is $c_n:=\dim X_{(n)}-\dim G+n$. We say that $X$ is \emph{$k$-modular\/} if $c_n \leq\dim Z-k$ for any $n\geq 1$.   We say that $X$ is \emph{$k$-large\/} if $X$ is $k$-principal and $k$-modular.

\begin{theorem} \label{thm:sufficient} Assume that $X$ has FPIG. Then the map $\pi_*\colon\lieA(X)^G\to\lieA(Z)$ is surjective in the following cases.
\begin{enumerate}
\item $X$ is $2$-principal and every slice representation of $X$ is orthogonal \cite[Theorem 0.2, Proposition 3.5]{SchLifting}.
\item $X$ is $3$-principal   \cite[Theorem 8.9]{SchLiftingDOs}.
\item $X$ is $2$-large \cite[Theorem 9.10]{SchLiftingDOs}.
\end{enumerate}
Hence if any of the conditions is satisfied, then the stratification of $Z$ is intrinsic.
\end{theorem}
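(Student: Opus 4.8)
Since the three surjectivity assertions are quoted from \cite{SchLifting} and \cite{SchLiftingDOs}, the plan is to recall the common mechanism behind them and then read off the last sentence from Theorem~\ref{thm:lunainvar}. First I would set up the lifting problem sheaf-theoretically. The map $\pi_*$ restricts a $G$-invariant derivation of $\O(X)$ to the subalgebra $\O(X)^G\simeq\O(Z)$; this is well defined because an invariant vector field preserves invariant functions. Both $\lieA(X)^G$ and $\lieA(Z)$ are finitely generated $\O(Z)$-modules, so $\operatorname{coker}\pi_*$ is a coherent sheaf on $Z$. The first observation is that lifting is unobstructed over the principal stratum: since $X$ has FPIG, the map $X_\pr\to Z_\pr$ is, \'etale-locally, a bundle with finite-isotropy fibers, so a vector field on $Z_\pr$ lifts to an invariant field on $X_\pr$. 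Hence $\operatorname{coker}\pi_*$ is supported on the non-principal locus $Z\setminus Z_\pr$, and the whole problem reduces to extending a generically defined lift across this locus.

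Next I would localize using Luna's slice theorem: near the point of $Z$ carrying a closed orbit $Gx$, the quotient $\pi$ is modeled by $\quot{N_x}{G_x}$ for the slice representation $N_x$, which permits an induction on the strata and reduces the global extension to a statement along each stratum. The heart is then a depth/reflexivity estimate. When $\codim_X(X\setminus X_\pr)\geq 3$, an $S_2$-type argument forces sections of the relevant module to extend across $Z\setminus Z_\pr$, killing the cokernel; this is case~(2). The other two cases are refinements that relax the required codimension to $2$: in case~(1) each slice carries an invariant nondegenerate quadratic form, and identifying vector fields with $1$-forms through this metric (the gradient construction) supplies the missing extension already in codimension~$2$; in case~(3) the $2$-modular hypothesis bounds the transcendence degrees $c_n$ of the strata, controlling the moduli of closed orbits within a stratum and again letting codimension~$2$ suffice.

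The hard part is exactly this extension step --- showing that a lift defined over $Z_\pr$ extends to a regular invariant vector field on all of $X$ --- and it is where the precise codimension hypotheses, and in cases~(1) and~(3) the extra orthogonal or modular structure, are indispensable; these verifications are carried out in \cite{SchLifting,SchLiftingDOs}. Granting surjectivity of $\pi_*$ in each of the three cases, the final assertion is immediate: by Theorem~\ref{thm:lunainvar}, surjectivity of $\pi_*$ implies that the stratification of $Z$ is intrinsic.
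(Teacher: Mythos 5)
Your proposal is correct and matches the paper's treatment: the paper offers no independent proof of the three surjectivity statements, deferring them (exactly as you do) to the cited results of \cite{SchLifting} and \cite{SchLiftingDOs}, and then obtains the final assertion by invoking Theorem~\ref{thm:lunainvar}, whose proof is the content of \S\ref{sec:preservestratif}. Your sketch of the lifting mechanism behind the citations (cokernel supported off $Z_\pr$, extension across the non-principal locus, with the orthogonal/modular hypotheses enabling codimension~$2$) is reasonable background but is not load-bearing, since the actual verifications are, as in the paper, left to the references.
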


Let $\lieg$ denote  the Lie algebra of $G$. Suppose that $G=G_1\times G_2$ is a product of reductive groups. Consider modules $V=V_1\oplus V_2$ where $V_i$ is a  $G_i$-module, $i=1$, $2$. If  the  stratification of $Z_1=\quot {V_1}{G_1}$ is not intrinsic, then the   stratification of $\quot VG$ isn't either, for any $V_2$. This explains why the hypotheses of the following theorem are necessary.

\begin{theorem}\cite[Corollary 11.6]{SchLiftingDOs}\label{thm:finitebad}
Let $G$ be a connected semisimple group and consider representations of $G$ which contain no trivial factor and all of whose irreducible factors are faithful representations of $\lieg$. Then, up to isomorphism, there are only finitely many such representations which do not have FPIG or are not $2$-large. Hence there are only finitely many representations, up to equivalence, for which the stratification of $Z$ is not intrinsic.
\end{theorem}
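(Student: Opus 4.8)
The plan is to split the statement into its two assertions and to prove the first (the finiteness of representations that lack FPIG or fail to be $2$-large) by an up-set argument in the monoid of representations; the second (``Hence\dots'') is then a formal consequence. For the reduction, suppose $V$ satisfies the standing hypotheses and has FPIG and is $2$-large. Then by Theorem \ref{thm:sufficient}(3) the map $\pi_*\colon\lieA(V)^G\to\lieA(Z)$ is surjective, so by Theorem \ref{thm:lunainvar} the stratification of $Z=\quot VG$ is intrinsic. Consequently every $V$ whose stratification is not intrinsic must already fail FPIG or fail to be $2$-large, and it suffices to bound the number of such $V$. To organize these, I would regard representations as points of the free commutative monoid $M$ on the set $\Lambda$ of dominant weights $\lambda$ with $V_\lambda$ nontrivial and faithful on $\lieg$ (for semisimple $\lieg=\bigoplus_i\lieg_i$ these are the $\lambda=(\lambda_i)$ with every $\lambda_i\ne 0$); a representation with no trivial factor and faithful irreducible factors is exactly a point $\sum_\lambda m_\lambda V_\lambda$ of $M$.

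The first ingredient is monotonicity: \emph{(M)} if $V$ has FPIG and is $2$-large, then so is $V\oplus W$ for every $W\in M$. The FPIG half is immediate, since the generic stabilizer of $V\oplus W$ is contained in that of $V$, hence stays finite. The $2$-principal and $2$-modular halves are the technical heart: one compares the non-principal locus and the isotropy strata of $V\oplus W$ with those of $V$ through Luna's slice theorem and the attendant dimension counts (cf.\ \cite{SchLiftingDOs}), the upshot being that both the codimension of $(V\oplus W)\setminus(V\oplus W)_\pr$ and the modular estimate $c_n\le\dim Z-2$ can only improve when a summand is added. Granting \emph{(M)}, the set $S\subset M$ of representations that have FPIG and are $2$-large is an up-set: it is closed under adding summands, so its complement (the ``bad'' representations) is a down-set.

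The remaining ingredients make $S$ cofinite. \emph{(F)}: all but finitely many single irreducibles $V_\lambda$ already lie in $S$. As the highest weight grows, $\dim V_\lambda\gg\dim G$; combining this with the classification of irreducible representations having positive-dimensional generic stabilizer and with the growth of the codimension of the non-principal locus, only finitely many $\lambda\in\Lambda$ yield a $V_\lambda$ that fails FPIG or $2$-largeness. Call this finite exceptional set $\Lambda_0$. \emph{(L)}: for each $\lambda\in\Lambda_0$ there is an integer $m_0(\lambda)$ with $m_0(\lambda)\,V_\lambda\in S$, because taking enough copies of a faithful representation forces the generic stabilizer to become finite and drives the codimension and modularity estimates above $2$ (this is the mechanism behind Theorem \ref{thm:adjointrep}). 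Now combine: by \emph{(M)} and \emph{(F)}, any $V=\sum_\lambda m_\lambda V_\lambda$ with $m_\lambda>0$ for some $\lambda\notin\Lambda_0$ lies in $S$, so a bad $V$ is supported on the finite set $\Lambda_0$. Setting $m_0=\max_{\lambda\in\Lambda_0}m_0(\lambda)$, a bad $V$ cannot have $m_\lambda\ge m_0$ for any $\lambda$, for then $V$ would contain $m_0(\lambda)V_\lambda\in S$ as a summand and lie in $S$ by \emph{(M)}. Hence every bad $V$ has $m_\lambda<m_0$ for $\lambda\in\Lambda_0$ and $m_\lambda=0$ otherwise, leaving finitely many possibilities; this gives the first assertion, and with the reduction above, the theorem.

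The main obstacle is ingredient \emph{(F)}: one must show, quantitatively and uniformly in $\lambda$, that the codimension of the non-principal locus and the modular codimension both exceed $2$ once the highest weight is large, so that the list $\Lambda_0$ is genuinely finite. This is exactly where the classification results for generic stabilizers and the growth estimate $\dim V_\lambda\gg\dim G$ must be pushed through. The second delicate point is verifying in \emph{(M)} that the modular condition $c_n\le\dim Z-2$ is preserved under adding a summand, since the modular estimate is the subtlest of the three to track through the slice-theorem bookkeeping; the $2$-principal and FPIG parts of \emph{(M)}, by contrast, I expect to be routine.
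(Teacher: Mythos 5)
Your reduction of the second assertion to the first is correct and is, in fact, the only part of this theorem the paper proves itself: FPIG together with $2$-largeness gives surjectivity of $\pi_*$ by Theorem \ref{thm:sufficient}(3), hence an intrinsic stratification by Theorem \ref{thm:lunainvar}, and the finiteness assertion is imported verbatim as \cite[Corollary 11.6]{SchLiftingDOs} with no proof given. So the real task was to prove that finiteness statement, and what you offer is a plan, not a proof. The combinatorial skeleton (representations as the free monoid on admissible highest weights, the good modules $S$ forming an up-set, bad modules therefore a down-set supported on a finite set with bounded multiplicities) is sound, but the two ingredients carrying all the weight are asserted rather than established. Ingredient \emph{(F)} is the genuine gap: the claim that all but finitely many irreducible, $\lieg$-faithful modules of a connected semisimple group have FPIG and are $2$-large is essentially the entire content of the cited corollary. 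For $G$ semisimple but not simple the irreducibles are outer tensor products $V_{\lambda_1}\otimes\cdots\otimes V_{\lambda_k}$ with every $\lambda_i\neq 0$, the simple-group results (such as Theorem \ref{thm:LS}(6)) do not apply factorwise, and the estimate $\dim V_\lambda\gg\dim G$ does not by itself control either the codimension of the non-principal locus or the modular bound; one needs the classification-type input on generic stabilizers and on the exceptional (coregular, non-$2$-large) lists. You concede this by labeling \emph{(F)} ``the main obstacle,'' which is precisely to say the core of the argument is missing.

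Concerning \emph{(M)}, the statement is true for connected semisimple $G$, but you have misplaced the difficulty, and locating it correctly shows the claim is not formal bookkeeping. The $2$-modular half is the easy one: since $G_{(v,w)}\subset G_v$, one has $(V\oplus W)_{(n)}\subset\bigcup_{m\geq n}V_{(m)}\times W$, so $\codim_{V\oplus W}(V\oplus W)_{(n)}\geq\codim_V V_{(n)}\geq n+2$, and with FPIG inherited this is $2$-modularity of $V\oplus W$ in three lines. The delicate half is $2$-principality: if $v$ is principal in $V$ then (as $G_v$ is finite) every orbit $G(v,w)$ is closed, and the non-principal such points sweep out the walls $V_\pr\times W^h$ for $h\in G_v$ acting nontrivially on $W$; were some such $h$ to act as a pseudo-reflection, this locus would have codimension one and $V\oplus W$ would fail to be $2$-principal. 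What rules this out is that a connected semisimple group maps into $\SL(W)$, so no element of $G$ fixes a hyperplane in any $G$-module --- this is exactly where the hypotheses on $G$ enter, and the step genuinely fails for disconnected groups (e.g.\ $G=\Z/2\times\Z/2$ with $V$ the $2$-dimensional module on which only the first generator acts, by $-1$, and $W$ the sign module of the second generator: $V$ is $2$-large with FPIG, yet $V\oplus W$ has a codimension-one non-principal locus $V\times\{0\}$). Likewise, FPIG preservation is about stabilizers of generic \emph{closed} orbits, not generic stabilizers, so even that ``immediate'' step hides the closed-orbit argument above. In short: the reduction and the up-set combinatorics are fine, but \emph{(F)} --- and an honest proof of \emph{(M)} --- are exactly the content you would need to supply.
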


In \S\ref{sec:preservestratif} we establish Theorem \ref{thm:lunainvar}. We discuss the relations between the various hypotheses placed on $X$ in  Theorem \ref{thm:sufficient}. We mention some interesting families of $G$-modules where the following alternative holds.  Either $V$ is coregular  or $V$ satisfies one of the  hypotheses of Theorem \ref{thm:sufficient}.    In \S\ref{sec:adjoint} we establish Theorem \ref{thm:adjointrep}.  
 
 We thank the referees and Stephen Donkin for helpful comments.
 
 \section{Preserving the stratification}\label{sec:preservestratif}  We need some preliminaries on Luna's slice theorem in order to establish Theorem \ref{thm:lunainvar}.
  Let $P$ and $Q$ be affine $G$-varieties and let $\tau\colon P\to Q$ be equivariant. Then we have the induced mapping $\quot\tau G\colon \quot PG\to\quot QG$. Following Luna we say that $\tau$ is \emph{excellent} if
 \begin{enumerate}
\item $\tau$ is \'etale.
\item $\quot \tau G$ is \'etale.
\item The canonical morphism $(\pi,\tau)\colon P\to \quot PG\times_{\quot QG} Q$ is an isomorphism.
\end{enumerate}
Recall that \'etale morphisms are those which are   smooth with finite fibers. In case $P$ and $Q$ are smooth, $\tau$ is \'etale if and only if $d\tau_p\colon T_pP\to T_{\tau(p)}Q$ is an isomorphism for all $p\in P$. Let $\tau$ be excellent. Then  $\tau$ is \emph{isovariant\/}, i.e., $G_p=G_{\tau(p)}$ for all $p\in P$.
If $\tau$ is also surjective then $\quot\tau G$ sends $(\quot PG)_{(H)}$ onto  $(\quot QG)_{(H)}$ for every $H$.    

Assume that $P$ and $Q$ are smooth. Let $A\in\lieA(Q)$. We may consider  $A$ as a family of tangent vectors $A(q)\in T_qQ$, $q\in Q$. Define $\tau^*A$ by $\tau^*(A)(p)=d\tau_p\inv A(\tau(p))$, $p\in P$. Then $\tau^*A\in\lieA(P)$ (\cite[4.2]{SchLiftingDOs} or \cite{Levasseur}) and we have

\begin{lemma} \label{lem:excellent} Let $\tau$ be as above. Then  $\tau^*$ gives rise to an  isomorphism  of $\O(P)\otimes_{\O(Q)}\lieA(Q)$ with $\lieA(P)$.  If $A\in\lieA(Q)^G$, then $\tau^*A\in\lieA(P)^G$ and we also have an induced isomorphism   of $\O(P)^G\otimes_{\O(Q)^G}\lieA(Q)^G$  with $\lieA(P)^G$. 
\end{lemma}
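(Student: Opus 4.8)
The plan is to prove the non-equivariant isomorphism first, using only that $\tau$ is \'etale between smooth affine varieties, and then to bootstrap to the invariant statement by feeding in the fiber-product condition (3) together with the reductivity of $G$.

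First I would note that since $P$ and $Q$ are smooth and affine, $\lieA(P)=\Gamma(P,T_P)$ and $\lieA(Q)=\Gamma(Q,T_Q)$ are the global sections of the tangent bundles, which are locally free of finite rank. Because $\tau$ is \'etale, $d\tau_p$ is an isomorphism for every $p$, so $d\tau$ induces an isomorphism of vector bundles $T_P\tosim\tau^*T_Q$ on $P$, whose inverse is exactly the assignment $A\mapsto\tau^*A$. Passing to global sections over the affine variety $P$, and using that pullback of a coherent sheaf corresponds to tensoring the associated module, i.e.\ $\Gamma(P,\tau^*T_Q)=\O(P)\otimes_{\O(Q)}\Gamma(Q,T_Q)$, I obtain the first assertion: $\tau^*$ is an isomorphism $\O(P)\otimes_{\O(Q)}\lieA(Q)\tosim\lieA(P)$.

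Next I would use equivariance. Since $\tau$ is $G$-equivariant, $d\tau$ is a $G$-equivariant bundle map, so the isomorphism above is $G$-equivariant and $\tau^*$ carries $\lieA(Q)^G$ into $\lieA(P)^G$; this amounts to a chain-rule check that $\tau^*$ commutes with the $G$-action on vector fields. To reach the invariant isomorphism I would invoke condition (3). Writing $\O(P)^G=\O(\quot PG)$ and $\O(Q)^G=\O(\quot QG)$, the fiber-product condition says $\O(P)\cong\O(P)^G\otimes_{\O(Q)^G}\O(Q)$ (the coordinate ring of a fiber product of affine schemes being the tensor product). Substituting this into the first assertion yields, $G$-equivariantly,
\[
\lieA(P)\cong\O(P)\otimes_{\O(Q)}\lieA(Q)\cong\O(P)^G\otimes_{\O(Q)^G}\lieA(Q),
\]
where $G$ acts only through the $\lieA(Q)$ factor, the coefficient rings $\O(P)^G$ and $\O(Q)^G$ carrying the trivial action.

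Finally I would take $G$-invariants of this last isomorphism, and this is the step I expect to be the main obstacle, since it is where the module structure must be combined with the representation theory (everything before it is formal). The point is that $(-)^G$ commutes with $\O(P)^G\otimes_{\O(Q)^G}(-)$. Here reductivity does the work: as a rational $G$-module that is simultaneously an $\O(Q)^G$-module, $\lieA(Q)$ decomposes into isotypic components, $\lieA(Q)=\lieA(Q)^G\oplus\lieA(Q)'$ with $(\lieA(Q)')^G=0$, and this splitting is $\O(Q)^G$-linear. Tensoring the trivial-action ring $\O(P)^G$ against a nontrivial isotypic piece $V_\lambda\otimes_\C M_\lambda$ (where $M_\lambda=\Hom_G(V_\lambda,\lieA(Q))$ is an $\O(Q)^G$-module) gives $V_\lambda\otimes_\C(\O(P)^G\otimes_{\O(Q)^G}M_\lambda)$, which is again $\lambda$-isotypic and hence contributes no invariants, while the trivial piece yields $\O(P)^G\otimes_{\O(Q)^G}\lieA(Q)^G$; flatness of $\O(P)^G$ over $\O(Q)^G$ (a consequence of $\quot\tau G$ being \'etale) reassures that the tensor respects the decomposition. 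Thus $\lieA(P)^G\cong(\O(P)^G\otimes_{\O(Q)^G}\lieA(Q))^G=\O(P)^G\otimes_{\O(Q)^G}\lieA(Q)^G$, which is the desired isomorphism.
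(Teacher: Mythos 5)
Your proof is correct. Note that the paper itself does not actually prove Lemma \ref{lem:excellent}: it outsources it to \cite[4.2]{SchLiftingDOs} and \cite{Levasseur}, and your argument is essentially the standard one behind those citations. The two halves are exactly right: for the first statement, \'etaleness gives a bundle isomorphism $T_P\simeq\tau^*T_Q$ whose inverse on global sections over the affine variety $P$ is $A\mapsto\tau^*A$, and $\Gamma(P,\tau^*T_Q)=\O(P)\otimes_{\O(Q)}\lieA(Q)$; for the invariant statement, condition (3) of excellence gives the ring isomorphism $\O(P)\simeq\O(P)^G\otimes_{\O(Q)^G}\O(Q)$, which collapses the base ring of the tensor product to $\O(Q)^G$ so that $G$ acts only through the $\lieA(Q)$ factor, and then reductivity (isotypic decomposition, which is $\O(Q)^G$-linear since invariant functions commute with the $G$-action) shows that taking invariants commutes with $\O(P)^G\otimes_{\O(Q)^G}(-)$. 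One small correction: the appeal to flatness of $\O(P)^G$ over $\O(Q)^G$ in the last step is superfluous. Tensor products commute with arbitrary direct sums unconditionally, and invariants of rational $G$-modules commute with direct sums, so the isotypic decomposition is respected with no flatness hypothesis; flatness would only matter if you needed to preserve some injectivity, which you never use.
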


 Recall that a subset $Y$ of $X$ is \emph{$G$-saturated\/} if $Y=\pi\inv(\pi(Y))$. Let $S$ be a stratum of $Z=\quot XG$. For $z\in Z$ let $\lieA(Z)(z)$ denote the values at $z$ of elements of $\lieA(Z)$. We say that    
 $\lieA(Z)$    \emph{preserves   $S$\/} if  $\lieA(Z)(s)\subset T_sS$ for every $s\in S$ and we say that $\lieA(Z)$   \emph{spans $S$\/} if $\lieA(Z)(s)\supset T_sS$ for every $s\in S$.

 \begin{proposition}\label{prop:preserve}
 Suppose that $\pi_*\lieA(X)^G=\lieA(Z)$. Then $\lieA(Z)$ spans and preserves every stratum of $Z$.
 \end{proposition}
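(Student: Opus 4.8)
The plan is to reduce, via Luna's slice theorem, the global statement about strata to a purely local computation at each point of a stratum, and then to show that the hypothesis $\pi_*\lieA(X)^G=\lieA(Z)$ controls the tangent spaces in exactly the way needed. Fix a stratum $S$ and a point $s\in S$. Choose $x\in X$ with $Gx$ closed and $\pi(x)=s$; the isotropy group $H:=G_x$ is reductive and the slice representation $H\to\GL(N_x)$ is defined. Luna's slice theorem produces a $G$-saturated étale neighborhood of $x$ modeled on the associated bundle $G\times_H N_x$, and the induced map on quotients $\quot{(G\times_H N_x)}{G}=\quot{N_x}H\to Z$ is an excellent morphism onto a neighborhood of $s$. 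The point of passing to the slice is that $\quot{N_x}H$ has a distinguished linear structure: the $H$-fixed subspace $N_x^H$ maps isomorphically onto the corresponding stratum through the origin, and $T_{\bar 0}S$ is identified with $N_x^H$ (equivalently, with the span of the differentials of the linear invariants, which are exactly the functions cutting out the stratum locally).

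**First** I would invoke Lemma~\ref{lem:excellent}. Because the slice morphism $\tau$ is excellent, $\tau^*$ is an isomorphism $\O(P)\otimes_{\O(Q)}\lieA(Q)\xrightarrow{\sim}\lieA(P)$, and the same holds after taking $G$-invariants with the base change to invariant rings. Dually, $\quot\tau G$ is étale, so it identifies $\lieA(Z)$ near $s$ with $\lieA(\quot{N_x}H)$ near $\bar 0$, compatibly with the pushforward from invariant vector fields upstairs. Consequently the hypothesis $\pi_*\lieA(X)^G=\lieA(Z)$ transports to the local model: $(\pi_{N_x})_*\lieA(N_x)^H=\lieA(\quot{N_x}H)$ in a neighborhood of the image of $0$. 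This reduces the Proposition to the case where $X=N_x$ is an $H$-module and $S$ is the stratum through the origin, with $T_{\bar 0}S=N_x^H$.

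**In the linear model** the two assertions become concrete. For "preserves," I would use that every $H$-invariant vector field $A$ on $N_x$ must be tangent, along the fixed locus $N_x^H$, to $N_x^H$: at a point $p\in N_x^H$ the value $A(p)$ is $H$-fixed in $T_pN_x=N_x$, hence lies in $N_x^H$, and pushing forward this says precisely $\lieA(Z)(s)\subset T_sS$. For "spans," the surjectivity hypothesis is what does the work: since $N_x^H$ consists of closed orbits with isotropy exactly $H$ and is itself identified with a linear stratum, the restrictions to $N_x^H$ of the pushed-forward fields already realize every tangent vector of the stratum—indeed the linear vector fields $\partial/\partial y_i$ in coordinates on $N_x^H$ are images of $H$-invariant fields on $N_x$, so their images span $T_sS$. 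Assembling, $\lieA(Z)(s)=T_sS$ at every $s\in S$, which is both spanning and preserving.

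**The main obstacle** I anticipate is bookkeeping the identification $T_sS\cong N_x^H$ and checking that it is compatible simultaneously with $\pi_*$ upstairs, with $\tau^*$/$\quot\tau G$ across the slice, and with the $H$-fixed-point decomposition in the model—i.e., making sure the "preserves" inclusion and the "spans" inclusion are being verified against the \emph{same} copy of $T_sS$. The spanning direction is where the hypothesis is genuinely used and is the delicate point: one must confirm that surjectivity of $\pi_*$ survives the passage to the étale slice (this is exactly the content of Lemma~\ref{lem:excellent} combined with the fact that $\quot\tau G$ is étale and isovariant) so that no tangent directions of $S$ are lost. Once the local surjectivity is in hand, both inclusions follow from the elementary fact that $H$-invariant vectors at points of $N_x^H$ are themselves $H$-fixed.
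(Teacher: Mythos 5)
Your proposal is correct and follows essentially the same route as the paper: reduction via Luna's slice theorem and Lemma~\ref{lem:excellent} to the linear model $N=N^H\oplus N'$, preservation from the structure of $\quot NH\simeq N^H\times\quot{N'}H$ (invariants of $N'$ having vanishing differential, equivalently your fixed-vector observation), and spanning from the $H$-invariant constant vector fields with values in $N^H$. The only packaging difference is that you transport the surjectivity hypothesis down to the (possibly singular) quotient of the local model---which needs \'etale base change for derivations beyond the smooth setting of Lemma~\ref{lem:excellent}, a true but uncited fact---whereas the paper instead evaluates invariant vector fields upstairs via its equations \eqref{eq:eq1}--\eqref{eq:eq3} and uses the hypothesis only once, globally on $Z$.
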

 
\begin{proof} Let $x\in X$ such that $Gx$ is closed and set $H=G_x$. We use Luna's slice theorem \cite{LunaSlice} to find the local structure of $Z$ and $S=Z_{(H)}$ near $Gx$. Now  there is a slice at $x$, that is, a locally closed $H$-saturated   smooth subvariety $Y$ of $X$ containing $x$ such that $G\times^HY\to X$ is excellent. Moreover, we can assume that $U:=GY$ is affine and $G$-saturated. Then 
\begin{equation}\label{eq:eq1}
\lieA(U)^G\simeq\O(U)^G\otimes_{\O(X)^G}\lieA(X)^G
\end{equation}
 and by Lemma \ref{lem:excellent} we have that 
 \begin{equation}\label{eq:eq2}
  \lieA(G\times^HY)^G\simeq\O(G\times^HY)^G\otimes_{\O(U)^G}\lieA(U)^G
 \end{equation}
 where, of course, $\O(G\times^HY)^G\simeq\O(Y)^H$.
Again, by Luna,   there is an excellent   $H$-morphism  $\tau\colon Y\to N$ with image $N_f$, where $N\simeq T_x(Y)$ and $f\in\O(N)^H$ does not vanish at the origin. Here $T_xX=T_x(Gx)\oplus N$ as $H$-module. Thus we have
\begin{equation}\label{eq:eq3}
\lieA(G\times^HY)^G\simeq\O(Y)^H\otimes_{\O(N_f)^H}\lieA(G\times^H N_f)^G.
\end{equation}
We may write $N=N^H\oplus N'$ as $H$-module. Then the $(H)$-stratum   of $\quot {(G\times^HN_f)}G\simeq\quot {N_f}H$ has inverse image $(N^H\times\NN(N'))_f$ where $\NN(N')$ is the null cone of $N'$. Now the $H$-invariants of $N^H\times N'$ are generated by the coordinate functions of $N^H$ and $\O(N')^H$ where every element of $\O(N')^H$ has vanishing differential at the origin. Let $\rho\colon N\to\quot NH$ be the quotient morphism.  Since $N\simeq N^H\oplus N'$ and $\quot NH\simeq N^H\times\quot{N'}H$, the differential of $\rho$ at $0$,   $d\rho_0$,   has rank $\dim N^H$ and $d\rho_0(N^H)=T_{\rho(0)}(\quot {N_f}H)_{(H)}$. Now going back to $X$ we see that $d\pi_x\colon T_xX\to T_{\pi(x)}Z$ has image the tangent space at $\pi(x)$ to the stratum $Z_{(H)}$.  

Now it remains to show that $\lieA(X)^G$ evaluated at $x$, modulo $T_x(Gx)$, maps onto $N^H$. But for every $v\in N^H$ there is the ($H$-invariant) constant  vector field $A$ with value $v$.  The vector field $A$ induces a  $G$-invariant vector field on $G\times^H N_f$. It  then follows from equations \eqref{eq:eq1}--\eqref{eq:eq3}  that $\lieA(X)^G$ evaluated at $x$ projects onto $N^H$. Thus $\lieA(Z)=\pi_*\lieA(X)^G$ preserves and spans the strata of $Z$.
\end{proof}

\begin{corollary}[= Theorem \ref{thm:lunainvar}]\label{cor:thmmain}
Suppose that $\pi_*\lieA(X)^G=\lieA(Z)$. Then the stratification of $Z$ is intrinsic.
\end{corollary}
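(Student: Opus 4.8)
The plan is to show that any automorphism $\phi\colon Z\to Z$ permutes the strata, exploiting the fact that $\lieA(Z)$ is an \emph{intrinsic} invariant of $Z$ — it depends only on $\O(Z)$ and knows nothing about $G$, $X$, or the stratification — together with the span-and-preserve conclusion of Proposition \ref{prop:preserve}.

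First I would record how $\phi$ acts on vector fields. Any automorphism $\phi$ of $Z$ induces an automorphism $\phi_*$ of $\lieA(Z)$ by $\phi_*A=d\phi\circ A\circ\phi\inv$; this is a Lie algebra automorphism, and it is $\O(Z)$-semilinear with respect to $f\mapsto f\circ\phi\inv$. Evaluating at a point, $d\phi_z$ then carries the subspace $\lieA(Z)(z)\subseteq T_zZ$ isomorphically onto $\lieA(Z)(\phi(z))$, so the ``evaluation distribution'' $z\mapsto\lieA(Z)(z)$ is $\phi$-invariant. The second, crucial, ingredient is Proposition \ref{prop:preserve}, whose hypothesis holds by assumption: $\lieA(Z)$ spans and preserves every stratum, so that $\lieA(Z)(s)=T_sS$ whenever $s$ lies in the stratum $S$. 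Hence the evaluation distribution is exactly the singular tangent distribution of the stratification.

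The key step is then to recover the strata intrinsically from $\lieA(Z)$ as the orbits of the pseudogroup generated by the flows of its elements. Fix a stratum $S$, which is smooth and irreducible, hence connected. Because $\lieA(Z)$ preserves $S$, each $A\in\lieA(Z)$ is tangent to $S$ and restricts to a regular vector field on $S$; because $\lieA(Z)$ spans $S$, these restrictions span $T_sS$ at every $s\in S$. Passing to the complex-analytic category, where local flows of holomorphic vector fields exist, the span condition shows that the set of points reachable from a given $s\in S$ by finite chains of such local flows is open in $S$, with open complement, hence — $S$ being connected — is all of $S$; the preserve condition shows this reachable set never leaves $S$. Therefore the flow-orbit equivalence classes are precisely the strata. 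Finally, since $\phi$ conjugates the flow of $A$ to the flow of $\phi_*A$, i.e.\ $\phi\circ\exp(tA)=\exp(t\,\phi_*A)\circ\phi$, it maps flow orbits to flow orbits; as the equivalence relation ``joined by a chain of flows of $\lieA(Z)$'' is manifestly $\phi$-invariant, $\phi$ permutes the strata. This is exactly the assertion that the stratification is intrinsic, and the argument applies verbatim to a complex-analytic automorphism, which is the content of the remark promised in the abstract.

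I expect the main obstacle to be the integration step, namely making precise that the flow-orbit equivalence classes equal the strata. Two points need care: algebraic vector fields need not be complete, so one must work with local analytic flows rather than global algebraic ones (this is precisely why the conclusion extends to holomorphic automorphisms), and one must check that a field tangent to $S$ along $S$ genuinely restricts to a vector field on the smooth variety $S$, i.e.\ preserves the ideal of $S$, which follows from the pointwise tangency $\lieA(Z)(s)\subseteq T_sS$. By contrast, the functoriality of flows under $\phi$ and the $\O(Z)$-module and Lie structure are formal. One could instead try the purely algebraic route of noting that $\phi$ preserves the intrinsic function $z\mapsto\dim\lieA(Z)(z)=\dim S_z$, but this only separates strata of distinct dimensions, so the tangency-and-flow argument seems unavoidable for the full statement.
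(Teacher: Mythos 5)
Your proof is correct, but it diverges from the paper at the decisive step, and the route you dismiss in your final sentences is in fact the paper's actual proof. Both arguments begin identically: the induced action on vector fields (the paper uses the pullback $\phi^*(A)=\phi^*\circ A\circ(\phi^*)\inv$, with $\phi^*(A)(z)=(d\phi_z)\inv A(\phi(z))$) shows $\dim\lieA(Z)(z)=\dim\lieA(Z)(\phi(z))$, and Proposition \ref{prop:preserve} gives $\lieA(Z)(s)=T_sS$ for $s\in S$. The paper then finishes in two lines: $\phi(z)$ lies in a stratum of dimension $k=\dim S$, and since $\phi(S)$ is irreducible while the strata are irreducible, locally closed and finite in number (together with the standard Luna fact that the boundary of a stratum is a union of strata of strictly smaller dimension, so that distinct $k$-dimensional strata have disjoint closures), the whole of $\phi(S)$ lies in a single stratum $S'$; applying the same to $\phi\inv$ yields $\phi(S)=S'$. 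So your assertion that the dimension function ``only separates strata of distinct dimensions'' and that the flow argument is ``unavoidable'' is mistaken: irreducibility and finiteness of the strata close precisely the gap you worry about, with no integration at all. Your pseudogroup-orbit argument is nonetheless sound --- the technical points you flag (local holomorphic flows of a derivation on a singular affine variety, obtained by lifting the derivation to an ambient $\C^N$ preserving $I(Z)$; pointwise tangency implying $A$ preserves the ideal of $\overline{S}$ and hence restricts to $S$; connectedness of $S$ in the classical topology) are all standard --- and it has genuine advantages: it uses neither finiteness of the strata nor the frontier property, and it makes the holomorphic statement natural, though not quite ``verbatim'': a holomorphic $\phi$ pushes $\lieA(Z)$ only into $\lieA_\an(Z)$, so you must also invoke Remark \ref{rem:holomorphic}(2) to know that $\lieA_\an(Z)$ still spans and preserves the strata before the conjugation-of-flows step applies, which is exactly how the paper handles that case. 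The cost of your route is the ODE machinery on a singular variety, which the paper's one-line dimension count avoids entirely.
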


\begin{proof}  Let $\phi\in\Aut(Z)$, let $z\in Z$ and let $A\in\lieA(Z)$,  considered   as a derivation of $\O(Z)$. Since $\phi^*$ is an automorphism of $\O(Z)$, $\phi^*(A):= \phi^*\circ A\circ(\phi^*)\inv$ is again a derivation of $\O(Z)$.  In terms of tangent vectors we have that 
$$
\phi^*(A)(z)=d(\phi\inv)_{\phi(z)}A(\phi(z))=(d\phi_z)\inv A(\phi(z)).
$$
Hence $\dim\lieA(Z)(z)=\dim\lieA(Z)(\phi(z))$. Let
$S$ denote the  stratum of $Z$ containing $z$ and set $k:=\dim S$.
Since  $\dim\lieA(Z)(\phi(z))=k$, $\phi(z)\in S'$ where $S'$ is a stratum of dimension $k$. Since the strata are irreducible and finite in number, we find that $\phi(S)\subset S'$. Hence $\phi$ preserves the  stratification.
\end{proof}
 
  \begin{remark}\label{rem:holomorphic}
 Let $\lieA_\an(Z)$ denote the derivations of the holomorphic functions $\HH(Z)$ on $Z$. In the case that $X$ is a $G$-module,   \cite[6.1, 6.6 and 6.9]{SchLifting}  show the following. 
 \begin{enumerate}
\item $\lieA_\an(X)^G$ is generated over $\HH(X)^G$ by $\lieA(X)^G$.
\item $\lieA_\an(Z)$ is generated over $\HH(Z)$ by $\lieA(Z)$. 
\item The natural map $\pi_*\colon\lieA_\an(X)^G\to\lieA_\an(Z)$ is surjective if and only if $\pi_*\colon\lieA(X)^G\to\lieA(Z)$ is surjective.
\end{enumerate}
Using the slice theorem, one can establish the same results in case that $X$ is an irreducible smooth affine $G$-variety.
Hence the argument of Corollary \ref{cor:thmmain} shows that the strata of $Z$ are preserved by  holomorphic automorphisms of $Z$.
 \end{remark}

 We now make some remarks concerning the conditions which arise in Theorem \ref{thm:sufficient}
\begin{remark} 
\begin{enumerate}
\item Let $H$ be reductive and let $W$ be a $H$-module which is not fix-pointed such that $\dim\quot WH=1$. Then $\quot WH\simeq\C$ and $\pi_*\colon\lieA(W)^H\to \lieA(\quot WH)$ is not surjective since all elements of $\lieA(W)^H$ vanish at zero while this is false for elements of $\lieA(\C)$. It then follows from the slice theorem that $\pi_*\colon \lieA(X)^G\to\lieA(Z)$ is not surjective if $X$ has a slice representation $H\to\GL(W)$ as above, i.e., $\pi_*$ is not surjective in case that $Z$ has a codimension one stratum. Now suppose that $V$ is a $G$-module where $G$ is connected semisimple or $V$ is orthogonal. Then the inverse image of a stratum of codimension two (or more) is of codimension two \cite[Corollary 7.4]{SchLifting}. Thus if there are no codimension one strata, $V$ has to be $2$-principal. Thus in many cases, surjectivity of $\pi_*$  implies that $V$ is $2$-principal.
\item If $G$ is simple or $V$ is an irreducible $G$-module and $G$  is semisimple, then $\pi_*$ is not surjective in case  $V$ does not have FPIG and is not fix-pointed   \cite[Theorem 7.13]{SchLiftingDOs}. Thus we are more or less forced to assume that $X$ has FPIG if we want $\pi_*$ to be surjective.
\end{enumerate}
\end{remark} 

  \begin{remark}\label{rem:TPIG}
Suppose that the $G$-module $V$ is  is $2$-principal and stable. Note that  $V$ is stable if it  has FPIG  or is orthogonal (\cite{LunaClosed} or \cite[Cor.\ 5.9]{SchLifting}). Let $H$ be a principal isotropy group. Then by \cite[Theorem 7.5]{SchLiftingDOs}, we have an isomorphism $G\times ^{N_G(H)} V^H\simeq V$. Since $V$ is acyclic, so are $G\times^{N_G(H)} V^H$ and $G/N_G(H)$. Let $K$ (resp.\ $L$) be maximal compact subgroups of $G$ (resp.\ $N_G(H)$) where $L\subset K$. Then $G/N_G(H)$ retracts onto $K/L$. Since $K/L$ is a compact manifold with the $\Z/2\Z$-cohomology of a point, we must have that $K/L$ is a point. Thus $N_G(H)=G$, $H$ is normal in $G$ and we must have that $V^H=V$. Hence $H$ is the kernel of the action of $G$ on $V$. Replacing $G$ by its image in $\GL(V)$ we see that our representation actually has TPIG.
\end{remark}

As we remarked before, if $V$ is coregular and not fix-pointed, there is no hope of the   stratification being intrinsic. There turn out to be  interesting series of $G$-modules $V$ which have the property that either $V$ is coregular or $\pi_*$ is surjective. In the following theorem, $k$, $\ell\geq 0$ and $(\C^7,\GG_2)$ and $(\C^8,\Spin_7)$ are the unique irreducible modules of the indicated dimensions.

\begin{theorem}\label{thm:LS}\label{rem:2large}
Let $V$ be one of the following  $G$-modules. 
\begin{enumerate}
\item $(V,G)=(k\C^n+\ell (\C^n)^*,\GL_n)$.\label{eq:gln}
\item $(V,G)=(k\C^n,\SO_n)$ or $(k\C^n,\Orth_n)$.\label{eq:sonon}
\item $(V,G)=(k\C^{2n},\Sp_{2n})$.\label{eq:spn}
\item Any  $\SL_2$-module.
\item $(V,G)=(k\C^7,\GG_2)$ or $(k\C^8,\Spin_7)$.\label{eq:g2b3}
\item Any irreducible nontrivial  module of a simple algebraic group.\label{eq:simple} 
\item $(V,G)=(k\C^n,\SL_n)$.\label{eq:sln}
\end{enumerate}
 If $V$ is not coregular, then $\pi_*$ is surjective and the stratification of $Z$ is intrinsic.
With the exception of case \eqref{eq:sln},  the module is $2$-large if it is not coregular. In  case \eqref{eq:sln}, when $n+2\leq k<2n$, the module is not coregular and not $2$-large, but it is $3$-principal. For $k\geq 2n$ it is $2$-large.
\end{theorem}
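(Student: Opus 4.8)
The plan is to prove Theorem \ref{thm:LS} by going through the seven families one at a time, and in each case either (a) verifying directly that $V$ is coregular, or (b) computing the relevant numerical invariants—principal isotropy group, codimensions of the non-principal locus, and the modular dimensions $c_n$—and checking that one of the hypotheses of Theorem \ref{thm:sufficient} holds, whence $\pi_*$ is surjective and (by Corollary \ref{cor:thmmain}) the stratification is intrinsic. The strategy is thus to reduce each case to a finite check of the conditions \emph{$2$-large}, \emph{$3$-principal}, or \emph{$2$-principal with orthogonal slice representations}, using the FPIG hypothesis throughout. The coregularity cutoffs are classical: for each family there is a known bound on $k$ (and $\ell$) below which $\quot VG$ is smooth, and these are tabulated in the invariant-theory literature (e.g.\ the classification of coregular representations of the classical groups), so the ``if $V$ is not coregular'' clause pins down exactly which $(k,\ell)$ remain to be analyzed.

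First I would dispose of the orthogonal cases \eqref{eq:sonon} and the symplectic-flavored ones, where every slice representation is again a sum of vector representations of a smaller orthogonal (resp.\ symplectic) group and is therefore automatically orthogonal; here I invoke part (1) of Theorem \ref{thm:sufficient}, so it suffices to establish $2$-principality once $V$ is past the coregular range. For the $\GL_n$ case \eqref{eq:gln} and the $\SL_n$ case \eqref{eq:sln}, the principal isotropy groups and slice representations are computed via the first fundamental theorem for $\GL_n$ and $\SL_n$: generically $k$ vectors and $\ell$ covectors determine an isotropy group that is trivial (FPIG/TPIG) once $k,\ell$ are large enough, and the slice representation at a closed orbit with stabilizer $\GL_m$ (or $\SL_m$) is again a representation of the same type on fewer vectors. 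This self-similarity lets me run an induction on $n$: the codimension estimates and the modular bounds $c_n\le\dim Z-k$ propagate from the slice representations. For the exceptional cases \eqref{eq:g2b3} and the catch-all irreducible case \eqref{eq:simple}, I would lean on the finiteness statement Theorem \ref{thm:finitebad}, which guarantees that only finitely many of these modules fail to be $2$-large or lack FPIG; combined with the explicit coregular classification for $\GG_2$, $\Spin_7$, and irreducible representations of simple groups, the residual finite list can be checked by hand (or cited) to confirm each non-coregular module is $2$-large.

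The delicate arithmetic is concentrated in case \eqref{eq:sln}: here one must show that for $n+2\le k<2n$ the module $k\C^n$ of $\SL_n$ is \emph{not} $2$-large—so Theorem \ref{thm:sufficient}(3) does not apply—yet is still $3$-principal, so that part (2) of Theorem \ref{thm:sufficient} rescues surjectivity. This requires an honest computation of $\dim X_{(n)}$ and hence of the modular invariants $c_n$ to locate exactly where $2$-modularity fails, together with a separate estimate that $\codim_X(X\setminus X_\pr)\ge 3$ throughout this range; the transition at $k=2n$ (where $2$-largeness is recovered) must also be verified. The main obstacle will be precisely this bookkeeping: one must track, for each isotropy dimension $n\ge 1$, the dimension of the stratum of vectors with that stabilizer, subtract off $\dim G-n$, and compare against $\dim Z-2$, all while the $\SL$ (as opposed to $\GL$) constraint shifts the generic behavior by one in a way that breaks $2$-largeness without breaking $3$-principality. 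I expect the rest—coregularity cutoffs and orthogonality of slice representations—to follow routinely from the standard structure theory once the correct isotropy data are assembled.
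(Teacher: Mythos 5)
Your proposal is a roadmap rather than a proof, and the paper itself does not attempt the computations you outline: its proof of this theorem is a short citation argument (cases \eqref{eq:gln}--\eqref{eq:spn} are due to Levasseur--Stafford \cite{LS}; the $2$-largeness of cases \eqref{eq:gln}--\eqref{eq:g2b3} when not coregular, and all the claims about case \eqref{eq:sln}, are from \cite[Chapter 11]{SchLiftingDOs}; case \eqref{eq:simple} is \cite{SchDiffSimple}). The actual content of the theorem --- the coregularity classifications, the codimension estimates, the modularity bounds, and the delicate $\SL_n$ arithmetic for $n+2\le k<2n$ --- fills chapters and entire papers in those references. Your plan defers exactly this content (``an honest computation,'' ``bookkeeping,'' ``I expect the rest \dots\ to follow routinely''), so nothing is actually established; a reader following your proposal would have to reconstruct the cited literature from scratch.

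Beyond incompleteness, two steps as described would fail. First, for case \eqref{eq:spn} you claim the slice representations of $\Sp_{2n}$ on $k\C^{2n}$ are ``automatically orthogonal'' and invoke Theorem \ref{thm:sufficient}(1). They are not: the slice representation at a closed orbit with stabilizer $\Sp_{2m}$ is a sum of copies of the vector representation $\C^{2m}$ (plus trivials), which carries a skew form, not a symmetric one; such a sum is orthogonal only when the number of copies is even. These cases must instead go through $2$-largeness, i.e.\ Theorem \ref{thm:sufficient}(3), which is how the paper (via \cite{LS} and \cite{SchLiftingDOs}) proceeds. Second, for case \eqref{eq:simple} you propose to combine Theorem \ref{thm:finitebad} with a hand check of ``the residual finite list,'' but Theorem \ref{thm:finitebad} is a pure finiteness statement: it neither identifies the exceptional modules nor relates them to coregularity, so there is no list to check by hand. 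Identifying that list and verifying that every non-coregular irreducible module of a simple group is $2$-large is precisely the classification carried out in \cite{SchDiffSimple}; citing it is unavoidable. You also never address case (4), arbitrary $\SL_2$-modules, which requires its own (classification-based) argument.
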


\begin{proof}
Cases \eqref{eq:gln}--\eqref{eq:spn} were first established by  Levasseur and Stafford \cite{LS}. In \cite[Chapter 11]{SchLiftingDOs} it is shown that $V$ is $2$-large if $V$ is not coregular in cases \eqref{eq:gln}--\eqref{eq:g2b3} and \cite{SchDiffSimple} establishes the same thing for case \eqref{eq:simple}. The claims about case \eqref{eq:sln} can be found in \cite[Chapter 11]{SchLiftingDOs}.
\end{proof}

Let $(H_1)$ and $(H_2)$ be isotropy classes of closed orbits in $V$.   Then $(H_1)<(H_2)$ implies that $Z_{(H_2)}$ lies in the closure of $Z_{(H_1)}$, hence the strata have different dimensions. 

\begin{remark}
 In   cases \eqref{eq:gln}, \eqref{eq:sonon}, \eqref{eq:spn}, \eqref{eq:g2b3} and \eqref{eq:sln} above,  one can check that  the isotropy classes of closed orbits form a chain $(H_1)<(H_2)<\dots$ so that the various strata have different dimensions. Thus when the stratification is intrinsic, so is each stratum.
\end{remark}

Let $H$ be a subgroup of $G$ and $V$ a $G$-module. We use the notation $(V,G)$ (resp.\ $(V,H)$) to specify which group is acting on $V$.
   \begin{lemma}
   Let $H$ be a reductive subgroup of $G$ and let $V$ be a $G$-module. 
   \begin{enumerate}
\item If $(V,G)$ has FPIG (resp.\ TPIG), then so does $(V,H)$.
\item If $(V,G)$ has TPIG and is $k$-principal, then the same holds for $(V,H)$.
\item If $(V,G)$ is $k$-modular, then so is $(V,H)$.
\end{enumerate}
      \end{lemma}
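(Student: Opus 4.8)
The plan is to reduce all three statements to the single elementary relation $H_v=H\cap G_v$ for every $v\in V$, which yields both $H_v\subseteq G_v$ and $\dim H_v\le\dim G_v$, together with the fact (used already in Remark~\ref{rem:TPIG}) that FPIG implies stability, so that generic $G$-orbits are closed. For part (1) I would fix a dense open $U\subseteq V$ on which $Gv$ is closed with finite (resp.\ trivial) stabilizer, and take $v\in U$. Its $H$-orbit need not be closed, so let $Hw$ be the closed $H$-orbit in $\overline{Hv}$. Since $\overline{Hv}\subseteq\overline{Gv}=Gv$, the point $w$ is $G$-conjugate to $v$, whence $G_w$ is finite (resp.\ trivial) and therefore $H_w=H\cap G_w$ is finite (resp.\ trivial). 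Thus $(V,H)$ possesses closed orbits with finite (resp.\ trivial) stabilizer, and its principal isotropy group, being minimal hence subconjugate to $H_w$, is finite (resp.\ trivial); this is FPIG (resp.\ TPIG).

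For part (3) I would first invoke part (1) to see that $(V,H)$ has FPIG, so that its $k$-modularity is defined. Using $\dim Z=\dim V-\dim G$ under FPIG (and likewise for $H$), the condition that $(V,G)$ be $k$-modular rewrites in the group-free form $\dim V_{(n)}\le\dim V-k-n$ for all $n\ge1$, where $V_{(n)}$ denotes the set of points of stabilizer dimension $n$; the condition for $H$ takes the identical shape. Since $\dim H_v\le\dim G_v$, a point with $\dim H_v=m$ satisfies $\dim G_v\ge m$, so $V_{(m)}^H\subseteq\bigcup_{n\ge m}V_{(n)}^G$. The $G$-bound then gives $\dim V_{(m)}^H\le\max_{n\ge m}(\dim V-k-n)=\dim V-k-m$, which is precisely the required $H$-bound.

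For part (2) the TPIG conclusion is immediate from part (1), so only $k$-principality remains, and I would deduce it from the inclusion $V_{\pr}^G\subseteq V_{\pr}^H$, equivalently $V\setminus V_{\pr}^H\subseteq V\setminus V_{\pr}^G$, which at once gives $\codim(V\setminus V_{\pr}^H)\ge\codim(V\setminus V_{\pr}^G)\ge k$. I would prove this inclusion by contraposition. If $v\notin V_{\pr}^H$, choose the closed $H$-orbit $Hw\subseteq\overline{Hv}$ with $H_w\ne 1$; then $G_w\supseteq H_w\ne 1$. The closed $G$-orbit $Gv_0$ in $\overline{Gw}\subseteq\overline{Gv}$ is the orbit that determines membership in $V_{\pr}^G$, and the standard fact that $G_w$ is subconjugate to $G_{v_0}$ forces $G_{v_0}\ne 1$, so $v\notin V_{\pr}^G$, as wanted.

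The only genuinely delicate point, and the one I expect to require care throughout, is the closed-orbit bookkeeping: passing from $G$ to the subgroup $H$ can destroy closedness of an orbit, so in both part (1) and part (2) one must work with the closed $H$-orbit inside $\overline{Hv}$ and exploit that it sits inside $\overline{Gv}$ (indeed inside $Gv$ itself when $v$ is generic). The inclusion $V_{\pr}^G\subseteq V_{\pr}^H$ is where this is sharpest, and its proof rests entirely on the subconjugacy of $G_w$ to the stabilizer of the closed $G$-orbit in $\overline{Gw}$; once that is in hand the remaining estimates are codimension and transcendence-degree inequalities that follow mechanically from $H_v\subseteq G_v$.
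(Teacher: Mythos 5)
Your proposal is correct, and your part (3) is essentially the paper's own proof: your ``group-free'' bound $\dim V^H_{(m)}\le\dim V-k-m$ is exactly the paper's telescoping estimate for $\dim V_{(n,m)}-\dim H+m$ once one substitutes $\dim\quot VG=\dim V-\dim G$ and $\dim\quot VH=\dim V-\dim H$ (both valid by FPIG), and your decomposition $V^H_{(m)}\subseteq\bigcup_{n\ge m}V^G_{(n)}$ is the paper's introduction of the sets $V_{(n,m)}$. Where you genuinely diverge is in parts (1) and (2). The paper disposes of both with a single sharper observation: if $Gv$ is closed with $G_v$ finite (resp.\ trivial), then $Hv$ is \emph{itself} closed with $H_v$ finite (resp.\ trivial) --- the point being that every $H$-orbit inside the closed set $Gv$ has stabilizer $H\cap G_{v'}$ finite, hence dimension exactly $\dim H$, so no such orbit can lie in the boundary of another; this yields $V^G_{\pr}\subseteq V^H_{\pr}$ at once. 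You deliberately avoid claiming that $Hv$ is closed (correctly noting it need not be obvious), and instead pass to the unique closed $H$-orbit $Hw\subseteq\overline{Hv}\subseteq Gv$, exploiting that $w$ is then $G$-conjugate to $v$; and for $k$-principality you argue contrapositively via the subconjugacy of $G_w$ to the stabilizer of the closed orbit in $\overline{Gw}$, which is indeed a standard consequence of Luna's slice theorem. Both routes are sound. Yours trades the paper's (unstated but easy) dimension count for an appeal to subconjugacy, and it has the minor advantage that you never need the fact, implicit in the paper's treatment of (2), that under FPIG every point of $V^G_{\pr}$ has closed $G$-orbit, since your contrapositive applies to arbitrary $v$; the paper's observation, on the other hand, makes (1) and (2) one-line statements.
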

 \begin{proof}
Parts (1) and (2) are trivial. One needs only to note that if $Gv$ is closed with $G_v$ trivial (resp.\ finite), then $Hv$ is closed with $H_v$ trivial (resp.\ finite).
For (3) note that we have  FPIG, by definition.  Let $V_{(n)}=\{v\in V\mid \dim G_v=n\}$ and let $V_{(n,m)}=\{v\in V_{(n)}\mid \dim H_v=m\}$. We have to show that, for $n$, $m\geq 1$, 
$\dim V_{(n,m)}-\dim H+m\leq\dim\quot VH-k.$ But
\begin{align*}
&\dim V_{(n,m)}-\dim H+m\leq\dim V_{(n)}-\dim G+n+\dim G-\dim H\leq\\
&\leq \dim \quot VG-k+\dim G-\dim H=\dim\quot VH-k.
\end{align*}
Hence $(V,H)$ is $k$-modular.
 \end{proof}

 Suppose that $G\subset\GL(W)$ where $G^0\subset\SL(W)$.    Consider $V=rW$, the direct sum of $r$ copies of $W$. We have an improvement  of  \cite[Theorem 1.2]{KuttlerReichstein}. 

\begin{theorem}
Let $V=rW$ as above, let $\pi\colon V\to\quot VG$ be the quotient morphism and let $n=\dim W$.
\begin{enumerate}
\item If $r\geq n+2$, then $\pi_*$ is surjective.
\item If $W$ is orthogonal and $r\geq n+1$, then $\pi_*$ is surjective.
\end{enumerate}
Hence in both cases the stratification of $Z$ is intrinsic.
\end{theorem}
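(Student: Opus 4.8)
The plan is to deduce both statements from Theorem~\ref{thm:sufficient} by checking, for $V=rW$, that $V$ has FPIG together with the appropriate principality, and, in the orthogonal case, that the slice representations are orthogonal. The unifying observation I would exploit is that the entire non-principal locus $V\setminus V_\pr$ is contained in the locus of \emph{non-spanning} $r$-tuples $\{(w_1,\dots,w_r)\mid w_1,\dots,w_r \text{ do not span } W\}$, whose codimension is immediate.

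First I would establish TPIG (hence FPIG) and stability for $r\ge n$. If $w_1,\dots,w_r$ span $W$ then, since $G\hookrightarrow\GL(W)$ acts faithfully, the stabilizer $G_v=\bigcap_i G_{w_i}$ is trivial. Moreover such a $v$ has closed orbit: any one-parameter subgroup $\lambda$ factors through the connected group $G^0\subseteq\SL(W)$, so its weights on $W$ sum to $0$; if $\lim_{t\to 0}\lambda(t)v$ exists then every $w_i$ lies in the sum of the non-negative weight spaces of $\lambda$, and spanning forces all weights to vanish, whence $\lambda$ is trivial by faithfulness. Thus no one-parameter subgroup destabilizes $v$, and $Gv$ is closed by the Hilbert--Mumford/Kempf criterion. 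Consequently every spanning $v$ lies in $V_\pr$, so $V$ has TPIG (hence FPIG) and is stable.

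Next I would bound the codimension. If $v\notin V_\pr$ then either $G_v\neq\{e\}$, in which case some $e\neq g\in G$ fixes $v$ and all $w_i$ lie in the proper subspace $\ker(g-\Id)\subsetneq W$, or $Gv$ is not closed, in which case by the computation above $v$ lies in a proper $\lambda$-stable subspace for some nontrivial $\lambda$; either way $w_1,\dots,w_r$ fail to span. Viewing $v$ as the columns of an $n\times r$ matrix, the non-spanning locus is the determinantal variety of matrices of $\rank\le n-1$, of codimension $(n-(n-1))(r-(n-1))=r-n+1$. Hence $V\setminus V_\pr$ has codimension at least $r-n+1$, so $V$ is $(r-n+1)$-principal: it is $3$-principal when $r\ge n+2$ and $2$-principal when $r\ge n+1$.

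Finally I would assemble the pieces. In case (1), $r\ge n+2$ gives a $3$-principal module with FPIG, so $\pi_*$ is surjective by Theorem~\ref{thm:sufficient}(2). In case (2), the invariant nondegenerate symmetric form on $W$ induces one on $V=rW$, so $V$ is orthogonal and, as is standard, every slice representation is then orthogonal; together with $2$-principality from $r\ge n+1$ and FPIG, Theorem~\ref{thm:sufficient}(1) yields surjectivity of $\pi_*$. In either case the stratification of $Z$ is intrinsic by Theorem~\ref{thm:lunainvar}. The main obstacle is the stability step: one must verify via Hilbert--Mumford that spanning tuples have closed orbits (this is exactly where $G^0\subseteq\SL(W)$ is used), and then recognize that both the non-closed-orbit locus and the positive-isotropy locus are swept into the single, easily measured non-spanning locus.
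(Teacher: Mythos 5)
Your proof is correct, but it takes a genuinely different route from the paper's. The paper deduces both parts from its Lemma on reductive subgroups (TPIG, $k$-principality and $k$-modularity all descend from a group to any reductive subgroup) together with Theorem \ref{thm:LS}: for (1) it replaces $G$ by the group $H$ generated by $G$ and $\SL(W)$, a finite scalar extension of $\SL(W)$, quotes that $(rW,\SL(W))$ is $3$-principal with TPIG, checks that the finite group of scalars does no harm because $\dim \quot V{\SL(W)}\geq 5$, and passes down to $G\subset H$, invoking Theorem \ref{thm:sufficient}(2); for (2) it quotes that $((n+1)\C^n,\Orth(W))$ is $2$-large with TPIG and passes down to $G\subset\Orth(W)$, invoking Theorem \ref{thm:sufficient}(3). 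You instead verify the hypotheses of Theorem \ref{thm:sufficient} directly: spanning $r$-tuples have trivial stabilizer (faithfulness of $G\subset\GL(W)$) and closed orbit (Hilbert--Mumford/Kempf, where $G^0\subset\SL(W)$ forces the weights of any destabilizing one-parameter subgroup to be nonnegative with zero sum, hence zero), so $V\setminus V_\pr$ lies in the determinantal locus of non-spanning tuples, of codimension $r-n+1$; this yields TPIG together with $3$-principality when $r\geq n+2$ and $2$-principality when $r\geq n+1$, and in the orthogonal case you conclude via Theorem \ref{thm:sufficient}(1) using the fact that slice representations of an orthogonal module are orthogonal, rather than via $2$-largeness and part (3). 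Both arguments are sound; yours is more self-contained (it needs neither Theorem \ref{thm:LS} nor the subgroup Lemma, avoids modularity entirely, and treats $n=1$ uniformly, which the paper leaves to the reader), while the paper's is shorter given the machinery already assembled and, in case (2), records the stronger conclusion that $(V,G)$ is $2$-large. The one step you label ``as is standard'' --- that slice representations of an orthogonal module at closed orbits are orthogonal --- deserves a citation to \cite{SchLifting}; it is indeed a known result there, and the paper itself relies on it in the proof of Proposition \ref{prop:adjoint}.
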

 
 \begin{proof}
For (1) we leave the case $n=1$ to the reader, so assume that $n\geq 2$. Replace $G$ by the group $H$ generated by $G$ and $\SL(W)$. Then $H$  is the extension of $\SL(W)$ by a finite group of scalars. The action of $\SL(W)$ on $V$ is $3$-principal with TPIG, and the action of the group of scalars on $\quot V{\SL(W)}$ is by the induced scalar action. Since the dimension of $\quot V{\SL(W)}$ is at least $5$, the action of $H$ is 3-principal with TPIG. Hence so is the action of $G$ and we have (1).

 For   (2) note that the action of $\Orth(W)$ on $(n+1)W$ has TPIG and is $2$-large (Theorem \ref{thm:LS}\eqref{eq:sonon}), hence so is the action of $G$.    
 \end{proof}
 The theorem is best possible since the modules $((n+1)\C^n,\SL_n)$ and $(n\C^n,\Orth_n)$ are coregular.
  
 \section{The adjoint representation}\label{sec:adjoint}
 The following result is established for type $\AA$ in \cite{LeBruynProcesiEtale}.
 \begin{proposition}\label{prop:adjoint}
 Let $V$ be $r$ copies of the adjoint module of the simple adjoint algebraic group $G$. Then $Z$ has no codimension one strata if (and only if) we have
 \begin{enumerate}
\item $r\geq 3$ and $G=\AA_1$.
\item $r\geq 2$ and $\rank G\geq 2$.
\end{enumerate}
 \end{proposition}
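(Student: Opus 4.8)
The plan is to determine, for $V = r\mathfrak{g}$ (with $\mathfrak{g}$ the Lie algebra of the adjoint group $G$ acting by $r$-fold diagonal adjoint action), exactly when a codimension one stratum occurs in $Z = \quot VG$, and to show this happens precisely outside the two listed ranges. By the discussion following Theorem \ref{thm:sufficient}, codimension one strata of $Z$ correspond to slice representations $H \to \GL(W)$ with $W$ not fix-pointed and $\dim \quot WH = 1$. So first I would translate the problem into the language of slice representations and isotropy groups for the adjoint action.

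\medskip

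The key structural input is that closed $G$-orbits on $V = r\mathfrak{g}$ are the \emph{semisimple} $r$-tuples, i.e.\ tuples $(x_1,\dots,x_r)$ lying in a common Cartan subalgebra up to conjugacy (for $r=1$ this is classical; for higher $r$ one uses that the closed orbit through a point is the orbit through the semisimple part, and the Luna/Richardson theory of commuting semisimple elements). For such a closed orbit the isotropy group $H = G_{(x_1,\dots,x_r)}$ is the reductive centralizer $\bigcap_i Z_G(x_i)$, and the slice representation is the $H$-action on the normal space $N = V/T(Gx)$, which decomposes via the root space picture of $\mathfrak{g}$. First I would fix a maximal torus $T$ and compute, for a generic point of a given isotropy stratum, the codimension of the stratum in $Z$ in terms of $\dim Z - \dim (\text{stratum})$, reducing the codimension-one question to a combinatorial count on the level of Cartan data and centralizers. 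The principal isotropy group is the center (trivial in the adjoint group) for $r \geq 2$ away from rank one, giving the generic stratum, and the next stratum down is governed by the smallest nontrivial centralizers, which arise from points centralized by a subregular or rank-one subalgebra.

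\medskip

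The heart of the argument is then a case analysis producing the boundary. The first stratum of positive codimension comes from isotropy group of minimal positive dimension, and one computes its codimension as a function of $r$ and the type of $G$. I expect the critical computation to involve the slice representation at a point whose centralizer is, up to isogeny, a product with an $\SL_2$ or torus factor; the normal space then looks like several copies of a small $H$-module, and $\dim \quot WH = 1$ occurs exactly when $r$ is at the threshold. Concretely, for $G = \AA_1$ the generic isotropy is trivial already at $r=2$ but the $r=2$ case is coregular with a codimension one stratum (the origin-type locus where all $x_i$ are proportional), and only at $r \geq 3$ does this stratum acquire codimension $\geq 2$; for $\rank G \geq 2$ the analogous marginal stratum already has codimension $\geq 2$ at $r = 2$. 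I would verify these two thresholds by computing the dimension of the relevant centralizer stratum versus $\dim Z = r\dim\mathfrak g - \dim(G\cdot\text{generic})$ and checking when the difference drops below two.

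\medskip

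The main obstacle, I expect, is the uniform treatment across all simple types $\BB$, $\CC$, $\DD$, $\EE$, $\FF$, $\GG$ of the codimension of the first nontrivial stratum: while the type-$\AA$ case is handled in \cite{LeBruynProcesiEtale} via explicit matrix (trace-algebra) computations, for the other types one must control the dimensions of centralizers of generic commuting pairs of semisimple elements and the resulting slice representation ranks. The cleanest route is to reduce the codimension-one condition to the existence of an isotropy group $H$ and an index such that the corresponding normal $H$-module $W$ has a one-dimensional quotient, then rule this out for $r \geq 2$, $\rank G \geq 2$ and for $r \geq 3$, $G = \AA_1$ by a rank/dimension inequality that is verified type by type. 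I would organize the remaining verification as a lemma bounding below the codimension of every positive-codimension stratum, isolating the unique marginal stratum that forces the stated thresholds.
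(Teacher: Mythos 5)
There is a genuine gap, and it is foundational. Your key structural claim --- that the closed $G$-orbits in $V=r\lieg$ are exactly the commuting semisimple $r$-tuples (tuples lying in a common Cartan subalgebra) --- is false for $r\geq 2$. By Richardson's criterion, the orbit of $(x_1,\dots,x_r)$ is closed if and only if the algebraic subalgebra generated by the $x_i$ is reductive in $\lieg$; for instance, a pair $(e,f)$ spanning an $\liesl_2$-triple consists of non-commuting nilpotent elements and yet has closed orbit. Your claim would force the principal isotropy group of $2\lieg$ to be a maximal torus $T$ (the centralizer of a generic commuting pair), whereas it is in fact \emph{trivial}: a generic pair generates $\lieg$, and $G$ is adjoint. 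This triviality is precisely what the paper's proof exploits. As a consequence, your proposed ``combinatorial count on Cartan data and centralizers'' ranges only over centralizers of toral subalgebras (i.e.\ Levi subgroups) and misses most of the isotropy classes of $\quot VG$; ruling out codimension one strata among the Levi strata alone proves nothing, since the dangerous strata could come from centralizers of nonabelian reductive subalgebras.

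Separately, the actual engine of the proof is absent from your plan. The paper reduces to $r=2$, notes that the principal isotropy group is trivial and that every slice representation of $r\lieg$ is orthogonal (via the Killing form), and concludes that a codimension one stratum would produce an orthogonal slice representation $H\to\GL(W+\Theta)$ with $W$ having trivial principal isotropy groups and $\dim\quot WH=1$. Such a $(W,H)$ is the complexification of a compact group acting freely and transitively on a sphere, so $H_\R$ is $S^0$, $S^1$ or $S^3$, leaving exactly three explicit candidate modules; these are then excluded by a determinant argument, a citation, and a short \emph{uniform} root-system argument (if $\rank G\geq 2$, a generator $A$ of $\Lie(H)$ must be killed by all roots except one pair $\pm\alpha_1$, and then $\alpha_2,\alpha_1+\alpha_2,\dots,\alpha_\ell$ vanishing on $A$ forces $A=0$). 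No type-by-type analysis over $\BB,\CC,\DD,\EE,\FF,\GG$ is needed. Your proposal replaces this classification step by a hoped-for ``rank/dimension inequality verified type by type'' that is never formulated, so even setting aside the incorrect description of the closed orbits, what you have is an outline of a strategy rather than a proof, and the strategy as stated would not close the case $\rank G\geq 2$, $r=2$.
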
 
 
 \begin{proof}
 We leave the case $G=\AA_1$ to the reader, so suppose that $\rank G\geq 2$. It is clearly enough to prove that there are no codimension one strata in the case that $r=2$. Now the principal isotropy group of $G$ acting on $\lieg$ is a maximal torus $T$ and the slice representation is the action of $T$ on $\lieg$, plus a trivial representation. Since the action of $T$ is faithful, the principal isotropy group is trivial. Thus if there is a codimension one stratum we need to have an orthogonal slice representation of the form $H\to\GL(W+\Theta)$ where $W$ has trivial principal isotropy group and a one-dimensional quotient.  Thus $(W,H)$ is the complexification of  a module $(W_\R,H_\R)$ of a compact Lie group  $H_\R$ which acts freely and  transitively on the sphere of $W_\R$. Hence $H_\R$ is $S^0$, $S^1$ or $S^3$ and we have the following possibilities.
 \begin{enumerate}
\item $(W,H)=(\C,\pm 1)$.
\item $(W,H)=(\nu_1+\nu_{-1},\C^*)$.
\item $(W,H)=(2\C^2,\SL_2)$.
\end{enumerate}
Here $\nu_j$ denote the one-dimensional $\C^*$ module of weight $j$.
Recall that, up to trivial factors, $V=\lieg/\lieh\oplus W$ as $H$-module. In case (1), since $G\subset\SL(\lieg)$, we must have $H\subset\SL(W)$, which is a contradiction.  Case (3) is ruled out by \cite[Corollary 13.4]{SchLifting}. Thus we are left with case (2). Since $V=\lieg+\lieg=\lieg/\lieh+W$ where $\lieh$ is a trivial module, we see that the action of $H=\C^*$ on $\lieg$ is $\nu_1+\nu_{-1}+\Theta$. Let $A$ generate  $\Lie(H)$ and choose a maximal torus $T$ and a set of simple roots. Since $H$ is conjugate to a subgroup of  $T$, our condition says that $\alpha(A)=0$ for all roots $\alpha$ of $\lieg$, except for at most one pair $\pm\beta$. We can assume that $\beta=\alpha_1$ is one of the simple roots $\alpha_1,\dots,\alpha_\ell$. Since $\ell>1$ we may assume that $\alpha_1+\alpha_2$ is also a root. Thus $\alpha_1+\alpha_2$, $\alpha_2,\dots,\alpha_\ell$ vanish on $A$. Hence $A=0$, a contradiction. Thus there are no codimension one strata.
  \end{proof}
 
 \begin{proof}[Proof of Theorem \ref{thm:adjointrep}]   This follows from the proposition above and Theorem \ref{thm:sufficient}(1). \end{proof}


 \providecommand{\bysame}{\leavevmode\hbox to3em{\hrulefill}\thinspace}
\providecommand{\MR}{\relax\ifhmode\unskip\space\fi MR }
\providecommand{\MRhref}[2]{
  \href{http://www.ams.org/mathscinet-getitem?mr=#1}{#2}
}

\providecommand{\href}[2]{#2}


\begin{thebibliography}{LBP87}

\bibitem[KR08]{KuttlerReichstein}
Jochen Kuttler and Zinovy Reichstein, \emph{Is the {L}una stratification
  intrinsic?}, Ann. Inst. Fourier (Grenoble) \textbf{58} (2008), no.~2,
  689--721.

\bibitem[Kra84]{KraftBook}
Hanspeter Kraft, \emph{Geometrische {M}ethoden in der {I}nvariantentheorie},
  Aspects of Mathematics, D1, Friedr. Vieweg \& Sohn, Braunschweig, 1984.

\bibitem[Kut11]{Kuttler}
J.~Kuttler, \emph{Lifting automorphisms of generalized adjoint quotients},
  Transformation Groups \textbf{16} (2011), 1115--1135.

\bibitem[LBP87]{LeBruynProcesiEtale}
Lieven Le~Bruyn and Claudio Procesi, \emph{\'{E}tale local structure of matrix
  invariants and concomitants}, Algebraic groups {U}trecht 1986, Lecture Notes
  in Math., vol. 1271, Springer, Berlin, 1987, pp.~143--175.  
  
\bibitem[Lev81]{Levasseur}
Thierry Levasseur, \emph{Anneaux d'op\'erateurs diff\'erentiels}, Lecture Notes
  in Math., vol. 867, Springer, Berlin, 1981, pp.~157--173.

\bibitem[LS89]{LS}
T.~Levasseur and J.~T. Stafford, \emph{Rings of differential operators on
  classical rings of invariants}, Mem. Amer. Math. Soc. \textbf{81} (1989),
  no.~412, vi+117.

\bibitem[Lun72]{LunaClosed}
Domingo Luna, \emph{Sur les orbites ferm\'ees des groupes alg\'ebriques
  r\'eductifs}, Invent. Math. \textbf{16} (1972), 1--5.

\bibitem[Lun73]{LunaSlice}
\bysame, \emph{Slices \'etales}, Sur les groupes alg\'ebriques, Soc. Math.
  France, Paris, 1973, pp.~81--105. Bull. Soc. Math. France, Paris, M\'emoire
  33.  
  
\bibitem[Sch80]{SchLifting}
Gerald~W. Schwarz, \emph{Lifting smooth homotopies of orbit spaces}, Inst.
  Hautes \'Etudes Sci. Publ. Math. (1980), no.~51, 37--135.

\bibitem[Sch94]{SchDiffSimple}
\bysame, \emph{Differential operators on quotients of simple groups}, J.
  Algebra \textbf{169} (1994), no.~1, 248--273.

\bibitem[Sch95]{SchLiftingDOs}
\bysame, \emph{Lifting differential operators from orbit spaces}, Ann. Sci.
  \'Ecole Norm. Sup. (4) \textbf{28} (1995), no.~3, 253--305.

\end{thebibliography}
   \end{document}